\DeclareMathOperator{\Res}{Res}
\theoremstyle{plain}
\newtheorem{theorem}{Theorem}[section]
\newtheorem*{theorem*}{Theorem}
\newtheorem{proposition}[theorem]{Proposition}
\newtheorem{lemma}[theorem]{Lemma}
\newtheorem{conjecture}[theorem]{Conjecture}
\theoremstyle{definition}
\newtheorem{definition}[theorem]{Definition}
\newtheorem{notation}[theorem]{Notation}
\newtheorem{remark}[theorem]{Remark}
\newtheorem{example}[theorem]{Example}
\newcommand{\enm}[1]{\ensuremath{#1}}          %
\newcommand{\cal}[1]{\mathcal{#1}}
\newcommand{\CC}{\enm{\mathbb{C}}}
\newcommand{\NN}{\enm{\mathbb{N}}}
\newcommand{\ZZ}{\enm{\mathbb{Z}}}
\newcommand{\PP}{\enm{\mathbb{P}}}
\newcommand{\TT}{\enm{\mathbb{T}}}
\newcommand{\Ii}{\enm{\cal{I}}}
\newcommand{\Oo}{\enm{\cal{O}}}
\renewcommand{\phi}{\varphi}
\renewcommand{\theta}{\vartheta}
\renewcommand{\epsilon}{\varepsilon}
\DeclareMathOperator{\red}{red}
\renewcommand{\to}[1][]{\xrightarrow{\ #1\ }}
\newcommand{\old}[1]{}
\title{Minimal Terracini loci in the plane:\\ gaps and non-gaps}
\author[E. Ballico]{Edoardo Ballico}\address{Dipartimento Di Matematica,
  Universit\`a di Trento, Via Sommarive 14, 38123, Povo, Trento, Italia}
\email{edoardo.ballico@unitn.it}
\author[M.C. Brambilla]{Maria Chiara Brambilla}\address{
Universit\`a Politecnica delle Marche, via Brecce Bianche, I-60131 Ancona, Italia}
\email{m.c.brambilla@univpm.it}
\thanks{Partially supported by GNSAGA of INdAM}
\subjclass[2010]{Primary: 14C20; Secondary:14N07} 
\keywords{interpolation problems, minimal Terracini locus, Terracini locus, zero-dimensional schemes}
\begin{document}

\maketitle

{\it Dedicated to Enrique Arrondo on the occasion of his sixtieth birthday}


\begin{abstract} 
We study minimally Terracini finite sets of points in the projective plane and we prove that the sequence of the cardinalities of minimally Terracini sets can have any number of gaps for degree great enough.
\end{abstract}

\section{Introduction}
Terracini loci in projective varieties have been recently introduced in \cite{BC21, bbs} and 
 subsequentely studied e.g.\ in 
\cite{BC2, GS, BV, G, GSTTT}.

A collection of double points is called a {\it Terracini locus} when they are in special position, i.e.\ when they impose an unexpected number of conditions to a linear system of a given degree.
In the theory of higher secant varieties of projective varieties (see \cite{BCCGO} for a general reference), a set of points is Terracini if they are {\it dependent} in the sense that the tangent spaces at these points span a linear space of dimension less than the expected one.

The study of Terracini loci in the cases of Veronese and Segre varieties 
is relevant also for applications to the theory of tensors (rank, decomposition and identifiability of tensors), see e.g.\
\cite{CG}.

In this note, we focus on the case of $\PP^n$ and, following \cite[Definition 2.8]{bbs}, we impose the additional condition that the sets of points span the whole space. 
Indeed a set of points spanning a smaller linear subspace is Terracini in such subspace, by \cite[Proposition 3.2]{Terracini1}. 

More precisely we give the following definition. 
For any $x\in\NN$, let $S(\PP^n,x):=\{\text{all the sets of $x$ points of $\PP^n$}\}$.
\begin{definition}\label{def1}
We call {\it Terracini locus} and we denote by $\TT(n,d;x)$ the set of all $S\in S(\PP^n,x)$ such that 
\begin{itemize}
\item
$h^0(\Ii_{2S}(d)) >0$ and $h^1(\Ii_{2S}(d)) >0$.
\item  $\langle S\rangle =\PP^n$.
\end{itemize}
We say that $S$  is 
{\it Terracini with respect to} $\Oo_{\PP^n}(d)$ if $S\in\TT(n,d;x)$.
\end{definition}

Given a Terracini set of cardinality $x$  it is easy to construct Terracini sets of any cardinality $y\ge x$.
Hence, in an attempt to classify the possible cardinalities of Terracini loci, we need a more restrictive definition.
For this reason, in \cite{Terracini1}
we have introduced the notion of {\it minimal Terracini loci}.
\begin{definition}\label{def-min1}
We call {\it minimal Terracini locus} and we denote by $\TT(n,d;x)'$ the set of all $S\in \TT(n,d;x)$, such that
\begin{itemize}
\item $h^1(\Ii_{2A}(d)) =0$ for all $A\subsetneq S$.
\end{itemize}
We say that $S$ is  {\it minimally Terracini with respect to $\Oo_{\PP^n}(d)$} if $S\in\TT(n,d;x)'$.
\end{definition}

In this note we consider minimally Terracini sets in the projective plane $\PP^2$ and
our main result is the following theorem.
\begin{theorem}\label{due2} 
Fix an integer $c\ge2$. Then for all $d\ge 14c+2$ there are integers $x_i$, $1\le i\le c$, and $y_i$, $1\le i\le c-1$, such
that \begin{itemize}
\item
$x_1<y_1<x_2<y_2<\cdots <x_{c-1} <y_{c-1} < x_c,$ 
\item $\TT(2,d;x_i)'\ne \emptyset$ for all $i=1,\dots ,c$, and
\item
 $\TT(2,d;y_i)'=\emptyset$ for all $i=1,\dots ,c-1$.
\end{itemize}
\end{theorem}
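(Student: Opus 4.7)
The plan is to prove the theorem in two halves: exhibit minimal Terracini sets at prescribed cardinalities $x_1<x_2<\cdots<x_c$, and then rule out the intermediate cardinalities $y_1,\ldots,y_{c-1}$ between them.

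For the non-gaps I would build configurations from an elementary \emph{collinear block}: a line $L$ together with $a:=\lceil(d+2)/2\rceil$ general points on $L$ plus one auxiliary point off $L$ (needed so that $\langle S\rangle=\PP^2$). The residual exact sequence
\[
0\to\Ii_{\mathrm{Res}_L(2S)}(d-1)\to\Ii_{2S}(d)\to\Ii_{(2S)\cap L,L}(d)\to 0
\]
produces $h^1(\Ii_{2S}(d))>0$ from $\Ii_{(2S)\cap L,L}(d)\cong\Oo_{\PP^1}(d-2a)$ of degree $\le -2$, while the residual $\mathrm{Res}_L(2S)$ consists of $a$ simple points on $L$ plus a double point off $L$, which imposes independent conditions on $\Oo_{\PP^2}(d-1)$ for $d$ large. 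Removing any point of $S$ either drops the degree on $L$ back to $\ge -1$ or spoils the residual, returning $h^1$ to zero; hence $S$ is minimally Terracini. Variants of this block (slightly longer collinear pieces, or blocks supported on a conic) together with disjoint unions in general position then furnish minimal Terracini sets of a discrete list of cardinalities. The bound $d\ge 14c+2$ is precisely the room needed to fit $c$ such disjoint blocks without accidental incidences that would destroy minimality.

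For the gaps I would classify the admissible cardinalities. Let $S$ be minimally Terracini with $|S|=y$, and let $L$ be a line maximising $a:=|S\cap L|$. Applying the residual sequence above, minimality forces $a$ into a narrow window: the inequality $2a\ge d+2$ is needed for $L$ to contribute $h^1$ (otherwise an analogous analysis with respect to a conic or another line yields the same conclusion), and minimality applied to a point of $S\setminus L$ forces $\mathrm{Res}_L(2S)$ to be very lean. Together these constraints restrict $y$ to a short arithmetic list, and the $y_i$ are chosen as the integers in the complement of this list which still lie between consecutive $x_j$'s.

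The principal obstacle is the gap statement. Showing $\TT(2,d;y_i)'=\emptyset$ requires excluding \emph{every} possible geometric configuration at cardinality $y_i$; this demands a complete case analysis via residuation with respect to well-chosen lines (and possibly conics of high contact), together with vanishing statements $h^1(\Ii_{\mathrm{Res}(2S)}(d-1))=0$ for the residuals. Once the admissible cardinalities are pinned down, $x_i$ is chosen inside and $y_i$ outside the admissible list, producing the interleaving required by the theorem.
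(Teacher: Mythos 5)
Both halves of your plan contain genuine gaps. For the non-gap half, your basic collinear block is not minimally Terracini: if $S$ consists of $a=\lceil (d+2)/2\rceil$ points on a line $L$ plus one auxiliary point $q\notin L$, consider the proper subset $A=S\cap L$ obtained by removing $q$. The divisor $2A\cap L\subset L$ has degree $2a\ge d+2$, so $h^1(\Ii_{2A}(d))\ge h^1(L,\Ii_{2A\cap L,L}(d))=2a-d-1>0$; since Definition \ref{def-min1} demands $h^1(\Ii_{2A}(d))=0$ for \emph{every} proper subset (spanning or not), minimality fails. The positivity of $h^1$ comes from the trace on $L$, which removing $q$ does not touch, contrary to your claim that deleting a point ``spoils the residual.'' The same objection rules out disjoint unions of blocks: if a sub-block $S_1$ already has $h^1(\Ii_{2S_1}(d))>0$, then any proper subset of $S$ containing $S_1$ still has $h^1>0$, so no such union can be minimal. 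The paper avoids this by taking $S$ to be a complete intersection of curves of degrees $t$ and $(d+3-t)/2$ (Proposition \ref{o1o1}), where both $h^1(\Ii_{2S}(d))=1$ and the vanishing for all proper subsets follow from the Koszul/linkage statement of Lemma \ref{d5.0101}; the realized cardinalities are the values $x_i=f(2i)$ (resp.\ $f(2i+1)$) of Notation \ref{lemma-numerico}, not the collinear-block counts your construction would give.

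For the gap half, your outline asserts the key point rather than proving it. A minimal Terracini set of cardinality $y_i$ need not have many points on a line or a conic --- the paper's own non-gap examples lie on curves of degree up to about $2c+1$ --- so residuation with respect to lines (and conics ``of high contact'') cannot exhaust the possible configurations, and the inequality $2a\ge d+2$ for a maximal line section is simply not forced. What is needed, and what your sketch does not supply, is the existence of a curve of controlled low degree through a critical subscheme of $2S$ together with sharp bounds on its degree of contact. In the paper this is exactly the role of the connectedness of the numerical character of a critical scheme (Theorem \ref{due00}, extended to the doubled scheme $W$ by Lemma \ref{NEW}) and of Lemma \ref{z03} (which rests on Ellia--Peskine): either $w=\deg W$ equals the complete-intersection value $s(d+3-s)$, impossible for $w=2(x_{i+1}-1)$, or there is a curve of degree $m<a$ cutting out a new critical scheme, and then the character bound \eqref{eq-g} yields $2y_i\le g(2i+1)$, a contradiction. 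Your ``short arithmetic list of admissible cardinalities'' is precisely this content, and without an argument of this type the claim $\TT(2,d;y_i)'=\emptyset$ remains unproved.
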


{We expect the same phenomenon to occur also in projective spaces of higher dimension, more precisely we formulate the following conjecture for $n\ge3$:}

\begin{conjecture}
Fix an integer $c\ge 2$. Then there exists an integer $d_0(c,n)$ such that for all $d\ge d_0(c,n)$ there are integers $x_i$, $1\le i\le c$, and $y_i$, $1\le i\le c-1$, such
that \begin{itemize}
\item
$x_1<y_1<x_2<y_2<\cdots <x_{c-1} <y_{c-1} < x_c,$ 
\item $\TT(n,d;x_i)'\ne \emptyset$ for all $i=1,\dots ,c$, and
\item
 $\TT(n,d;y_i)'=\emptyset$ for all $i=1,\dots ,c-1$.
\end{itemize}
\end{conjecture}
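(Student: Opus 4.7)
The plan is to proceed by induction on $n$, using Theorem \ref{due2} as the base case $n=2$, and to extend minimally Terracini configurations from a hyperplane $H\cong\PP^{n-1}$ to all of $\PP^n$ by adjoining a controlled number of general points. Fix $n\ge 3$ and assume the conjecture for $\PP^{n-1}$ with threshold $d_0(c,n-1)$; we will obtain $d_0(c,n)$ by allowing $d$ to be somewhat larger in order to absorb the extra cohomological slack that appears in higher dimension.

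For existence, given a minimally Terracini set $S_0\subset H$ of cardinality $x^{(n-1)}$ with respect to $\Oo_H(d)$, choose a general set $T\subset\PP^n\setminus H$ of cardinality $m$ and put $S:=S_0\cup T$. The Castelnuovo--Horace residue sequence
\[
0\to\Ii_{S_0\cup 2T,\,\PP^n}(d-1)\to\Ii_{2S,\,\PP^n}(d)\to\Ii_{2S_0,\,H}(d)\to 0,
\]
in which $2S\cap H=2S_0$ and $\Res_H(2S)=S_0\cup 2T$, lets one lift the Terracini condition from $H$ to $\PP^n$ provided $S_0\cup 2T$ imposes independent conditions on $|\Oo_{\PP^n}(d-1)|$. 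One would choose $m$ so that the expected dimension of this residual system is minimal; then generality of $T$ forces $h^1(\Ii_{S_0\cup 2T}(d-1))=0$ and gives $h^1(\Ii_{2S,\PP^n}(d))=h^1(\Ii_{2S_0,H}(d))>0$. Minimality in $\PP^n$ is verified by removing one point at a time: for $p\in S_0$ the trace part vanishes by minimality of $S_0$ in $H$ and the residue part vanishes by the numerical choice of $m$; for $q\in T$ the residual scheme only loses a fat point, making vanishing strictly easier. The candidate cardinalities in $\PP^n$ are therefore $x_i:=x_i^{(n-1)}+m$, and by running the argument for each $x_i^{(n-1)}$ supplied by the inductive hypothesis one populates the $c$ existence values.

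For non-existence, one must show that the intermediate cardinalities $y_i:=y_i^{(n-1)}+m$ support no minimally Terracini set in $\PP^n$. The natural route is the reverse operation: assume $S\in\TT(n,d;y_i)'$ exists, pick a hyperplane $H'$ meeting $S$ in a subset of prescribed size, and analyze the trace $S\cap H'$ and the residue $\Res_{H'}(2S)$ via the same sequence, with the aim of producing a minimally Terracini configuration in $H'\cong\PP^{n-1}$ of cardinality $y_i^{(n-1)}$ — contradicting the inductive hypothesis.

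The main obstacle is precisely this non-existence step. Forward slicing is not automatic: the trace $S\cap H'$ of a minimal Terracini set need not be minimal, nor even Terracini, in $H'$, and the residual scheme mixes simple and fat points in a way that obscures the planar phenomenon responsible for the gaps. Ruling out the intermediate cardinalities in $\PP^n$ seems to require a genuinely new ingredient — for instance a degeneration in the style of the M\'ethode d'Horace that preserves minimality, or a refined numerical invariant attached to the pair $(S,H')$ that reduces, after $n-2$ carefully chosen hyperplane sections, exactly to the planar gap count. Devising such an invariant (and showing it is monotone under the slicing) is what stands between the conjecture and a proof by pure induction on $n$.
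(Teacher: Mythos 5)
This statement is the paper's open \emph{conjecture}: the authors prove only the planar case (Theorem \ref{due2}) and explicitly leave $n\ge 3$ unproved, so there is no proof in the paper to compare yours against. Your proposal does not close the conjecture either: you concede at the end that the non-existence half (showing $\TT(n,d;y_i)'=\emptyset$) is missing and would need ``a genuinely new ingredient.'' That concession alone means this is a research programme, not a proof. But the problem is worse: the existence half is also broken, and in a concrete way.

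The flaw is in your minimality check for points $q\in T$, where you claim that removing a point of $T$ makes vanishing ``strictly easier.'' It is the opposite: minimality requires $h^1(\Ii_{2A}(d))=0$ for \emph{every} proper subset $A\subsetneq S$, and $S_0$ itself is such a subset. Apply the residual sequence of $H$ to $2S_0$ alone:
\[
0\to \Ii_{S_0}(d-1)\to \Ii_{2S_0,\PP^n}(d)\to \Ii_{2S_0\cap H,H}(d)\to 0,
\]
where $2S_0\cap H$ is exactly the scheme of double points of $S_0$ inside $H$. Since $h^2(\Ii_{S_0}(d-1))=h^2(\Oo_{\PP^n}(d-1))=0$, the map $H^1(\Ii_{2S_0,\PP^n}(d))\to H^1(\Ii_{2S_0\cap H,H}(d))$ is surjective, so $h^1(\Ii_{2S_0,\PP^n}(d))>0$ whenever $S_0$ is Terracini in $H$ with respect to $\Oo_H(d)$ --- which is precisely your hypothesis. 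Then for any $q\in T$ one has $2S_0\subseteq 2(S\setminus\{q\})$, and Remark \ref{a9} gives $h^1(\Ii_{2(S\setminus\{q\})}(d))\ge h^1(\Ii_{2S_0,\PP^n}(d))>0$. Hence \emph{no} set of the form $S_0\cup T$ with $T\ne\emptyset$ can lie in $\TT(n,d;x)'$: the lifted obstruction survives every deletion of a point of $T$, so your construction produces Terracini sets but never minimally Terracini ones. (And $T=\emptyset$ is excluded because $S_0$ fails to span $\PP^n$.) This is exactly why the higher-dimensional case resists the naive induction and why the authors state it as a conjecture: a genuine construction would have to make every point of $S$, including those off the hyperplane, essential to the $h^1$-obstruction, which your Horace-style lifting does not do.
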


{The techniques we use in this paper are based mainly on the two notions of {\it critical schemes} for a Terracini set, and {\it numerical character} of a zero dimensional scheme in the plane. The numerical character  is a combinatorial description of the Hilbert function, which is specific for the plane and has useful consequences. In particular we prove in Theorem \ref{due00} that the numerical character of any critical scheme for a Terracini set is connected, and this fact is crucial in our main proof.
}

{
In the following section we recall some preliminary results, we define the critical schemes (Definition \ref{critic}) and we describe their properties. We also explain what is the numerical character of zero-dimensional schemes. The last Section \ref{secpro} is devoted to the proof of our main Theorem \ref{due2}.
}

{We thank the referee for pointing out a mistake in the first version of this paper and for useful comments.}
 
\section{Preliminaries}
We work over an algebraically closed field of characteristic $0$.

Given a scheme $X\subset \PP^n$, we denote by $|\Ii_X(d)|:=\PP(H^0(\PP^n,\Ii_X\otimes\Oo(d)))$ the linear system of hypersurfaces of degree $d$ containing $X$.

We recall first some preliminary facts.
\begin{remark}\label{o101}
 Let $C\subset \PP^2$ {be a curve of degree $t\le d$} and $Z\subset C$ be a zero-dimensional scheme.  Since $h^1(\Oo_{\PP^2}(d-t) )=0$, the restriction map $H^0(\Oo_{\PP^2}(d))\to H^0(\Oo_C(d))$ is surjective. 
Thus $h^1(\Ii_Z(d)) =h^1(\Ii_{Z,C}(d))$. 
Moreover we have
$$h^0(\Oo_C(d)) =\binom{d+2}{2} -\binom{d-t+2}{2} =\frac12t(2d+3-t )$$ and 
$$h^0(\Oo_C(d+1))  =\frac12t(2d+5-t ).$$
\end{remark}

\begin{remark}\label{a9}
Let $W\subset Z\subset \PP^n$ be zero-dimensional schemes. Since  $h^i(\Ii_Z(t)) =0$ for all $i\ge2$ and $h^i(\Ii_{W,Z}(d)) =0$ for all $i\ge 1$, we have:
$$h^0(\Ii_Z(d))\le h^0(\Ii _W(d))\mbox{\ \ and \ }h^1(\Ii _W(d)) \le h^1(\Ii _Z(d)),$$
and
$$h^0(\Ii_Z(d))\le h^0(\Ii _Z(d+1))\mbox{\ \ and \ }h^1(\Ii _Z(d+1)) \le h^1(\Ii _Z(d)).$$
\end{remark}

\begin{notation}\label{notationstau}
For any zero-dimensional scheme $Z\subset \PP^n$ such that $Z\ne \emptyset$, let 
$\tau(Z)$ be the maximal integer $t$ such that $h^1(\Ii_Z(t)) >0$ and
$s(Z)$ be the minimal integer $t$ such that $h^0(\Ii_Z(t))> 0$. 
In other words $s(Z)$ is the minimal degree of a hypersurface containing $Z$.
\end{notation}

Note that since $h^1(\Ii_Z(t))>0$ for any $t\le-1$, then $\tau(Z)\ge -1$ and $\tau(Z)=-1$ if and only if $\deg (Z) =1$.
\begin{proposition}
For any finite and non-empty set $S\subset\PP^n$ we have $$s(S)<s(2S)\le 2s(S).$$
\end{proposition}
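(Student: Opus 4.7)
The plan is to handle the two inequalities by completely different arguments, both short.

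For the upper bound $s(2S)\le 2s(S)$, I would simply take $F\in H^0(\Ii_S(s(S)))$ a nonzero hypersurface of minimal degree through $S$. The ideal sheaf of $2S$ is $\Ii_S^{2}$, so $F^{2}\in \Ii_S^{2}=\Ii_{2S}$ is a hypersurface of degree $2s(S)$ containing $2S$, giving $s(2S)\le 2s(S)$.

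For the strict lower bound $s(S)<s(2S)$, I would argue by contradiction, exploiting the characteristic $0$ hypothesis. Since $S\ne\emptyset$, we have $d:=s(S)\ge 1$. Suppose some $F\in H^0(\Oo_{\PP^n}(d))$ vanishes on $2S$. Vanishing on $2S=V(\Ii_S^2)$ means that at every point $p\in S$, the polynomial $F$ vanishes together with all its first order partial derivatives. Hence, for each coordinate $x_i$, the polynomial $\partial F/\partial x_i$ is a form of degree $d-1$ that vanishes on $S$. By minimality of $d=s(S)$, no nonzero form of degree $d-1$ vanishes on $S$, so $\partial F/\partial x_i=0$ for every $i$. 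In characteristic $0$ this forces $F$ to be a constant, contradicting $\deg F=d\ge 1$. Therefore no hypersurface of degree $s(S)$ contains $2S$, i.e.\ $s(2S)>s(S)$.

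I do not anticipate a real obstacle here; the main point worth emphasizing is that the hypothesis $S\ne\emptyset$ is used (to guarantee $d\ge 1$) and that the derivative argument relies on characteristic $0$, which the paper has already fixed as a blanket assumption.
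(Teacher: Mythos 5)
Your proposal is correct and follows essentially the same route as the paper: the upper bound comes from doubling the minimal-degree hypersurface through $S$ (your $F^2$ is the paper's $2D$), and the strict lower bound comes from the same characteristic-zero derivative argument, with the only cosmetic difference that the paper exhibits one nonvanishing partial of degree $s(S)-1$ through $S$ to contradict minimality, while you force all partials to vanish identically and conclude $F$ is constant. (The final step ``no degree $s(S)$ form contains $2S$ implies $s(2S)>s(S)$'' tacitly uses $s(S)\le s(2S)$, which is immediate from $S\subseteq 2S$ and is exactly the paper's appeal to Remark \ref{a9}.)
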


\begin{proof} 
If $D$ is a hypersurface in the linear system $ |\Ii_S(s(S)|$, then $2S\subset 2D$ and hence $s(2S) \le 2s(S)$.

On the other hand, by Remark \ref{a9}
 it follows $s(S)\le s(2S)$. 
 Now we prove  that $s(S)< s(2S)$.
Assume by contradiction that $s(S)=s(2S)$. 
Then there is a hypersurface $D\in |\Ii_S(s(S))|$ such that $S\subseteq \mathrm{Sing}(D)$. Let $D$ be defined by $f=0$ for some form $f$ of degree $d$. Fix coordinates $(x_0:\cdots:x_n)$ such that  $\partial _{x_n}(f)(0:\cdots:1) \ne 0$.
Then the degree $s(S)-1$ hypersurface defined by $\partial_{x_n}(f)=0$ contains $S$, which contradicts the minimality of $s(S)$. 
\end{proof}
In Example \ref{due001}  we will see that it is possible that $s(2S) =2s(S)$ even for $S\in \TT(2,d;x)$.

\smallskip

The {\it critical schemes} are  crucial tools in our proofs.
We recall  their definition from \cite{Terracini1}:
\begin{definition}\label{critic}
Given 
a collection $S$ of $x$ points in $\PP^n$,
we say that a zero-dimensional scheme $Z$ is {\it $d$-critical for $S$} if:
\begin{itemize}
\item $Z\subseteq 2S$ and any connected component of $Z$ has degree  $\le 2$,
\item $h^1(\Ii_Z(d))>0$,
\item $h^1(\Ii_{Z'}(d))=0$ for any  $Z' \subsetneq Z$.
\end{itemize}
\end{definition}

{By the so-called curvilinear lemma (see e.g. \cite[Lemma 2.9]{Terracini1}), we have that for any $S\in\TT(n,d;x)$ there exists a $d$-critical scheme for $S$.}

Recall that, by \cite[Lemma 2.12]{Terracini1}, if $Z$ is critical for $S$, then $S\subseteq Z$. 



\begin{proposition}\label{cita1} 
Given $S\in \TT(n,d;x)'$ and any critical scheme $Z$ of $S$,
then   we have
 \begin{itemize}
\item $\tau(2S) =d$ and $\tau(Z)=d$ 
\item $s(S)\le s(Z)\le s(2S)$
\end{itemize}
\end{proposition}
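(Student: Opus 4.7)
The plan is to establish the two groups of claims separately.

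The inequalities $s(S)\le s(Z)\le s(2S)$ are immediate from the chain of scheme-theoretic inclusions $S\subseteq Z\subseteq 2S$: the left inclusion is \cite[Lemma 2.12]{Terracini1} (recalled just after Definition~\ref{critic}), and $Z\subseteq 2S$ is built into Definition~\ref{critic}. For any inclusion of schemes $X\subseteq Y$, a hypersurface of degree $t$ containing $Y$ automatically contains $X$, so the minimal such degrees are ordered accordingly. The lower bounds $\tau(2S)\ge d$ and $\tau(Z)\ge d$ are equally immediate from the definitions, since $h^1(\Ii_{2S}(d))>0$ is part of $S\in \TT(n,d;x)$ and $h^1(\Ii_Z(d))>0$ is part of Definition~\ref{critic}.

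For the matching upper bounds, $Z\subseteq 2S$ together with Remark~\ref{a9} yields $h^1(\Ii_Z(t))\le h^1(\Ii_{2S}(t))$ for every $t$, whence $\tau(Z)\le\tau(2S)$; so it suffices to prove $h^1(\Ii_{2S}(d+1))=0$. Fix any $p\in S$ and put $A=S\setminus\{p\}$; by minimality of $S$ and Remark~\ref{a9} one has $h^1(\Ii_{2A}(d+1))\le h^1(\Ii_{2A}(d))=0$. Applying cohomology to the residual sequence
$$0\to \Ii_{2S}(d+1)\to \Ii_{2A}(d+1)\to \Oo_{2p}(d+1)\to 0$$
then identifies $h^1(\Ii_{2S}(d+1))$ with the corank of the restriction map $H^0(\Ii_{2A}(d+1))\to H^0(\Oo_{2p}(d+1))\cong \CC^{n+1}$.

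The plan to prove surjectivity of this map is a multiplication-by-linear-forms argument: given any $f\in H^0(\Ii_{2A}(d))$ with $f(p)\ne 0$, each product $\ell f$ with $\ell\in H^0(\Oo_{\PP^n}(1))$ lies in $H^0(\Ii_{2A}(d+1))$, and a short computation in $\Oo_{2p}=\Oo_{\PP^n,p}/\mathfrak m_p^2$ shows that as $\ell$ varies the restrictions $(\ell f)|_{2p}$ sweep out the whole $(n+1)$-dimensional space $H^0(\Oo_{2p}(d+1))$. The main obstacle is to rule out the alternative, namely that every $f\in H^0(\Ii_{2(S\setminus p)}(d))$ satisfies $f(p)=0$ for every choice of $p\in S$. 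The plan there is to feed this base-locus condition into the curvilinear lemma to extract a $d$-critical subscheme $W_0\subseteq 2(S\setminus p)\cup\{p\}$, use \cite[Lemma 2.12]{Terracini1} together with the minimality of $S$ to force $\operatorname{supp}(W_0)=S$ while the component of $W_0$ at $p$ is the reduced point, and then derive an arithmetic contradiction by comparing, as $p$ varies over $S$, the dimension formulas $h^0(\Ii_{2(S\setminus p)}(d))=\binom{n+d}{n}-(x-1)(n+1)$ supplied by minimality with the conclusion that these subspaces all sit inside the proper subspace $H^0(\Ii_S(d))\subsetneq H^0(\Oo_{\PP^n}(d))$. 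Making this final step precise is where most of the technical work will lie.
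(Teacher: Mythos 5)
Your handling of the easy parts agrees with the paper: $s(S)\le s(Z)\le s(2S)$ from $S\subseteq Z\subseteq 2S$ (Lemma 2.12 of \cite{Terracini1} plus Remark~\ref{a9}), the lower bounds $\tau(2S),\tau(Z)\ge d$ from the definitions, and $\tau(Z)\le \tau(2S)$ from Remark~\ref{a9}. The real content is the upper bound $h^1(\Ii_{2S}(d+1))=0$, and here there is a genuine gap. The paper does not prove this fact internally at all: it simply quotes \cite[Theorem 3.1]{Terracini1}, which states exactly that $\tau(2S)=d$ for a minimally Terracini $S$. You instead set out to reprove that theorem, and your reduction is fine as far as it goes (the residual sequence for one point $p$, the observation that surjectivity of $H^0(\Ii_{2A}(d+1))\to H^0(\Oo_{2p}(d+1))$ follows from the existence of a single $f\in H^0(\Ii_{2A}(d))$ with $f(p)\ne 0$ via multiplication by linear forms), but the decisive step --- ruling out the possibility that for every $p\in S$ the point $p$ lies in the base locus of $|\Ii_{2(S\setminus\{p\})}(d)|$ --- is not carried out. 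You acknowledge this yourself (``making this final step precise is where most of the technical work will lie''), so the proposal is a plan with its hardest step missing, not a proof.

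Moreover, the contradiction you sketch for that step does not obviously exist along the lines indicated. Each space $V_p=H^0(\Ii_{2(S\setminus\{p\})}(d))$ has dimension $\binom{n+d}{n}-(x-1)(n+1)$, while $H^0(\Ii_S(d))$ has dimension at least $\binom{n+d}{n}-x$; since $(x-1)(n+1)\ge x$ for $x\ge 2$, $n\ge 1$, the containment $V_p\subseteq H^0(\Ii_S(d))$ for all $p$ produces no numerical tension by itself, and it is unclear how summing or intersecting the $V_p$ would. Likewise, the critical scheme $W_0\subseteq 2(S\setminus\{p\})\cup\{p\}$ with reduced component at $p$ that you extract via the curvilinear lemma and \cite[Lemma 2.12]{Terracini1} is legitimate, but nothing contradictory is derived from it. Either complete that argument in detail, or do what the paper does and invoke \cite[Theorem 3.1]{Terracini1} directly; as written, the key claim $\tau(2S)=d$ remains unestablished.
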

\begin{proof} By {\cite[Theorem 3.1]{Terracini1}} we have $\tau(2S) =d$.   
Hence, by the curvilinear lemma, we have that $\tau(Z)=d$ for every critical scheme $Z$ of $S$.
The second statement follows from Remark \ref{a9}, since $S\subseteq Z\subseteq 2S$.
\end{proof}

The following technical result will be useful in the sequel.
\begin{lemma}\label{d5.0101} 
Let $W\subset \PP^2$ be a complete intersection of a plane curve $A$ of degree $a$ and a plane curve $B$ of degree $b$. Then $\deg (W) =ab$, 
$$h^1(\Ii _W(t)) =0 \quad \text{ for all }\quad t\ge a+b-2,$$ 
$$h^1(\Ii _W(a+b-3))=1\text{ and }h^1(\Ii _{W'}(a+b-3)) =0\text{ for all }W'\subsetneq W.$$
\end{lemma}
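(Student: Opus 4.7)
The plan is to combine the Koszul resolution of the complete intersection $W$ with the Cayley--Bacharach property of complete intersections. The degree statement $\deg(W)=ab$ follows at once from B\'ezout's theorem.

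For the cohomological statements, I would twist the Koszul resolution
$$0\to \Oo_{\PP^2}(-a-b)\to \Oo_{\PP^2}(-a)\oplus \Oo_{\PP^2}(-b)\to \Ii_W\to 0$$
by $\Oo_{\PP^2}(t)$ and pass to cohomology. Since $h^1(\Oo_{\PP^2}(k))=0$ for every $k$, the resulting long exact sequence identifies $h^1(\Ii_W(t))$ with the kernel of the induced map $H^2(\Oo_{\PP^2}(t-a-b))\to H^2(\Oo_{\PP^2}(t-a))\oplus H^2(\Oo_{\PP^2}(t-b))$. Using that $h^2(\Oo_{\PP^2}(k))\ne 0$ only for $k\le -3$, the source vanishes for $t\ge a+b-2$, which yields $h^1(\Ii_W(t))=0$. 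For $t=a+b-3$ the source is one-dimensional and the target vanishes (as $a,b\ge 1$ forces $a-3,b-3\ge -2$), so $h^1(\Ii_W(a+b-3))=1$.

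For the minimality, I would first reduce to the case $\deg(W')=\deg(W)-1$: any proper $W'\subsetneq W$ extends inside $W$ to some $W_1$ with $W'\subseteq W_1\subsetneq W$ and $\deg(W_1)=\deg(W)-1$ (lift any length-one submodule of $\Ii_{W'}/\Ii_W$), and Remark \ref{a9} gives $h^1(\Ii_{W'}(a+b-3))\le h^1(\Ii_{W_1}(a+b-3))$. For such a $W_1$, the short exact sequence $0\to \Ii_W\to \Ii_{W_1}\to \Ii_{W_1}/\Ii_W\to 0$ twisted by $\Oo(a+b-3)$ has rightmost term a skyscraper of length one, and the induced cohomology long exact sequence reduces the desired vanishing to showing that the connecting map $\CC\to H^1(\Ii_W(a+b-3))=\CC$ is an isomorphism. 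This is precisely the classical Cayley--Bacharach property, which is the main obstacle and rests on the Gorenstein property of $W$ with socle in degree $a+b-3$ (equivalently $\omega_W\cong \Oo_W(a+b-3)$ by adjunction); I would invoke this as a standard fact.
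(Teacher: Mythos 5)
Your argument is correct, and the first two assertions are handled exactly as in the paper: B\'ezout for $\deg(W)=ab$, then the twisted Koszul resolution $0\to \Oo_{\PP^2}(t-a-b)\to \Oo_{\PP^2}(t-a)\oplus\Oo_{\PP^2}(t-b)\to \Ii_W(t)\to 0$ together with the vanishing of $h^2(\Oo_{\PP^2}(k))$ for $k\ge -2$ and $h^2(\Oo_{\PP^2}(-3))=1$. Where you diverge is the last assertion. Both you and the paper first reduce to subschemes $W'\subsetneq W$ of colength one (your lifting of a length-one submodule of $\Ii_{W'}/\Ii_W$ plus the monotonicity of $h^1$ is the same reduction the paper makes implicitly), but then the paper links such a $W'$ to a single point $p$ via the complete intersection $(A,B)$ and concludes $h^1(\Ii_{W'}(a+b-3))=h^1(\Ii_p)=0$ from the liaison formula of Harris and Peskine--Szpiro, whereas you rephrase the vanishing as the surjectivity of the connecting map $\CC\to H^1(\Ii_W(a+b-3))$ in the colength-one sequence and invoke the classical Cayley--Bacharach property of complete intersections, i.e.\ $\omega_W\cong\Oo_W(a+b-3)$. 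The two routes are manifestations of the same Gorenstein duality, and each treats its key input as a citation: you black-box Cayley--Bacharach (which is indeed standard, e.g.\ in Eisenbud--Green--Harris), the paper black-boxes the linkage formula. The liaison phrasing is marginally more economical here because it gives the vanishing for the colength-one $W'$ in one stroke, without the connecting-map bookkeeping; your phrasing has the small advantage of making explicit where the one-dimensional $h^1(\Ii_W(a+b-3))$ gets killed. No gap either way.
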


\begin{proof}
B\'ezout gives $\deg (W) =ab$. Since $h^2(\Oo_{\PP^2}(-3)) =1$ and $h^2(\Oo _{\PP^2}(t)) =0$ for all $t\ge -2$, the Koszul complex associated to the equations of the plane curves $A$ and $B$ gives $h^1(\Ii _W(t)) =0$ for all $t\ge a+b-2$ and  $h^1(\Ii _W(a+b-3))=1$. 

To prove that $h^1(\Ii _{W'}(a+b-3)) =0$ for all $W'\subsetneq W$ it is sufficient to prove it for all $W'$ of degree $ab-1$.
 Fix $W'\subset W$ such that $\deg (W')=ab-1$. The curves $A$ and $B$ links $W'$ to a point, $p$. The  linkage formula (\cite[Lemma at p. 199]{harris}, \cite[\S 3]{ps}) gives $h^1(\Ii _W(a+b-3)) =h^1(\Ii_p)=0$.
\end{proof}

\begin{notation}\label{lemma-numerico}
Given an integer $d$, we define the functions $$f(t):= t(d+3-t)/2$$
and $$g(t):=t(d+(5-t)/2).$$
Note that the  functions $f$ and $g$ are strictly increasing in the intervals $(0,(d+3)/2)$ and $(0,d+(5/2))$ respectively.
\end{notation}

We recall now the crucial notion of  {\it numerical character}, which is a way to encode the Hilbert function of a zero-dimensional scheme $Z$ of $\PP^2$, see \cite{e,ep,gp}.

The Hilbert function of any subvariety $Y\subset \PP(V)$ with homogeneous ideal $I(Y)$ is defined as
$$H_Y: \NN\to \NN, \quad H_Y( t)= \dim(\CC[V]_t/I(Y)_t).$$
and its first difference is
$$\Delta_Y (t) := H_Y(t) -H_Y(t-1).$$

Assume now that $Z\subset \PP^2$ is a zero-dimensional scheme of degree $z$ and
 set $s:= s(Z)$ and $\tau:= \tau(Z)$ as in Notation \ref{notationstau}. 

For all $t\in \NN$, we consider the sequence $\{h^1(\Ii_Z(t))\}_{t\ge 0}$. Since $h^1(\Ii_Z(t))=z-H_Z(t)$, we have
$$\Delta_Z (t)= h^1(\Ii_Z(t-1)) -h^1(\Ii _Z(t))\ge0.$$

The knowledge of the sequence of integers $\Delta(t)$ is equivalent to the knowledge of the sequence of integers $n_0, n_1, \cdots , n_{s-1}$, called the {\it numerical character} of $Z$, such that
\begin{itemize}
\item $n_0\ge n_1\ge \cdots \ge n_{s-1}\ge s$,
\item  for all $t\ge 0$ 
\begin{equation}\label{eqep1}
h^1(\Ii_Z(t)) = \sum _{i=0}^{s-1}((n_i-t-1)_+ -(i-t-1)_+),
\end{equation}
where we denote by $a_+:= \max \{a,0\}$ for any $a\in\ZZ$.
\end{itemize}

In particular, since $h^1(\Ii _Z)=z-1$, we have $z =\sum _{i=0}^{s-1} (n_i-i)$, i.e.
\begin{equation}
\sum _{i=0}^{s-1} n_i=z+ \binom{s}{2}.
\label{sum-n_i}
\end{equation}

Since $n_0\ge n_1\ge \cdots \ge n_{s-1}\ge s$, from \eqref{sum-n_i} we also obtain:
\begin{equation} \label{numerico}
z\ge s^2-\binom{s}{2}=\binom{s+1}{2}.
\end{equation}

Obviously $\Delta_Z (t) =t+1$ if $t<s$. For all $t\ge s$ from \eqref{eqep1} one sees that ${\Delta_Z} (t)$ is the number of integers $i$ such that $n_i\ge t+1$. 

Note also that $n_0 =\tau+2$, by \cite{e}. 
{Then from \eqref{sum-n_i} we also easily deduce the following inequality:
\begin{equation}\label{eq-g}
z\le s(\tau+2)-\binom{s}{2}=s\left(\tau-\frac{5-s}2\right).
\end{equation}}

The numerical character $n_0\ge \cdots \ge n_{s-1}$ is said to be {\it connected} if $n_i\le n_{i+1}+1$. 
By \eqref{eqep1}, the numerical character of $Z$ is connected if and only if the {sequence $\{\Delta_Z(t)\}$ is strictly decreasing in the range $\sigma_0\le t\le \tau$, where $\sigma_0$ is the minimal $\sigma$ such that $\Delta_Z(\sigma)>\Delta_Z(\sigma+1)$.

\begin{remark}\label{sotto-curva}
Let $Z$ be a zero-dimensional scheme in the plane and let $n_0\ge \cdots \ge n_{s-1}$ be its numerical character.
Then, by \cite[Proposition at p. 112]{ep} (see also \cite[Prop. 4]{e}), if $n_{t-1}> n_t+1$, then 
 there is a degree $t$ plane curve $C$ such that the scheme 
 $Z':= Z\cap C$ has numerical character $n_0\ge \cdots \ge n_{t-1}$. 
 Moreover
the scheme $\Res_C(Z)$ has numerical character $m_0,\dots ,m_{s-t-1}$ with $m_i =n_{t+i} -t$.
\end{remark}

There is a characterization of the connectedness of the numerical character of a zero-dimensional scheme $Z\subset \PP^2$ in term of the minimal free resolution of $Z$ (\cite[Theorem 7]{e}).

The following lemma will be crucial in the following proofs.

\begin{lemma}\label{z03}
Given  a zero-dimensional scheme $W\subset \PP^2$ of degree $w>0$, set $d=\tau(W)$ 
and $s= s(W)$.
 Assume that 
 \begin{itemize}
 \item[(a)] $s\le (d+3)/2$, 
 \item[(b)]
 the numerical character $n_0,\dots ,n_{s-1}$ is connected,
 \end{itemize}
 and that there is a positive integer $a$ such that 
 \begin{itemize}\item[(c)] $a^2\le w$ and 
\item[(d)] $d\ge a-3 +\frac{w}{a}$.
\end{itemize}
Then 
\begin{itemize}
\item[(i)] either $a=s$ and $w = s(d+3-s)$,
\item[(ii)] or $s<a$ and there is an integer $0<m<a$ and a degree $m$ curve $C$ such that $\tau(W\cap C)=d$.
\end{itemize}
\end{lemma}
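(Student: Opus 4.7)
The plan is to pit a lower bound on $w$ coming from the connectedness hypothesis (b) against the upper bound on $w$ extracted from (d), and then to separate cases according to whether $s=a$ or $s<a$.

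First I would rewrite (d) as $w\le a(d+3-a)$, so that together with (c) it reads $a^2\le w\le a(d+3-a)$. This forces $a\le d+3-a$, i.e.\ $a\le (d+3)/2$; in particular both $s$ and $a$ then belong to the interval on which the function $t(d+3-t)$ is strictly increasing (cf.\ Notation \ref{lemma-numerico}).

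Next I would extract the numerical content of (b). Since $n_0=\tau(W)+2=d+2$ and the character is connected ($n_{i-1}\le n_i+1$), a one-line induction gives $n_i\ge d+2-i$ for every $i=0,\dots,s-1$; the constraint $n_{s-1}\ge s$ required by the definition of the numerical character is automatic thanks to (a), since $d+3-s\ge s$. Substituting into \eqref{sum-n_i} yields
\[
w\;=\;\sum_{i=0}^{s-1}(n_i-i)\;\ge\;\sum_{i=0}^{s-1}(d+2-2i)\;=\;s(d+3-s).
\]
Combining with the upper bound one gets the chain $s(d+3-s)\le w\le a(d+3-a)$, and monotonicity of $t(d+3-t)$ on $(0,(d+3)/2]$ immediately forces $s\le a$.

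If $s=a$, the two inequalities collapse to $w=s(d+3-s)$, which is alternative (i). If $s<a$, I would simply take $C$ to be any plane curve of degree $s$ containing $W$, which exists by the very definition of $s=s(W)$ (and $s\ge 1$ because $W\ne\emptyset$). Scheme-theoretically $W\cap C=W$, so $\tau(W\cap C)=\tau(W)=d$, and $m:=s$ satisfies $0<m<a$, giving (ii).

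The only step with any real content is the inequality $w\ge s(d+3-s)$ for connected numerical characters, which reduces to the elementary observation $n_i\ge d+2-i$. The rest is pure arithmetic comparison, so I do not expect a genuine obstacle; the point to keep straight is that the strict monotonicity of $t(d+3-t)$ on $(0,(d+3)/2)$ is precisely what excludes the case $s>a$, leaving only the two alternatives listed.
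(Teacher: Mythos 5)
Your proof is correct for the lemma as stated, and up to case (ii) it coincides with the paper's argument: both derive $a\le (d+3)/2$ from (c) and (d), both use connectedness together with $n_0=\tau(W)+2=d+2$ to get $n_i\ge d+2-i$ and hence $w\ge s(d+3-s)$ via \eqref{sum-n_i}, both conclude $s\le a$ from the monotonicity of $t\mapsto t(d+3-t)$ on $(0,(d+3)/2]$, and both obtain (i) by combining the lower bound with $w\le a(d+3-a)$ when $s=a$. The divergence is in case (ii): the paper invokes \cite[Cor. 2]{ep} to produce the integer $m<a$ and the degree $m$ curve with $\tau(W\cap C)=d$, whereas you observe that the conclusion, as literally written, is already satisfied by $m=s$ and any $C\in |\Ii_W(s)|$, since then $W\cap C=W$ and $\tau(W\cap C)=\tau(W)=d$. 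This shortcut is logically valid, and it exposes that alternative (ii), as phrased, carries no information beyond the inequality $s<a$, so the appeal to Ellia--Peskine is not needed for the statement itself; on the other hand, the citation suggests the authors intend a stronger output (a curve of degree $m$ cutting out a subscheme of $W$ that still carries the nonzero $h^1$, with properties beyond the trivial choice $C\supset W$), which your argument does not recover. For the way Lemma \ref{z03} is consumed in the proof of Theorem \ref{due2}, only the existence of a curve of degree $m\le a-1$ with $\tau(W\cap D)=d$ (and then $s(W')\le m$ fed into \eqref{eq-g}) is used, so the trivial choice $m=s$ appears sufficient there as well; but if the finer content of \cite[Cor. 2]{ep} were ever needed downstream, your proof would have to be upgraded.
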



\begin{proof} 
From the assumptions $w\le a(d+3-a)$ and $a^2\le w$, it follows that $a\le (d+3)/2$. 

We prove first that $s\le a$.

Indeed since $n_0,\dots ,n_{s-1}$ is connected and $n_0=\tau(W)+2=d+2$, then $n_i\ge d+2-i$ for all $i$. 
By \eqref{sum-n_i} we have 
\begin{equation}\label{bella}
w\ge s(d+2) -s(s-1) =s(d+3-s).\end{equation}
Recall, from Notation \ref{lemma-numerico}, that the function $f(t)= t(d+3-t)$ is strictly increasing in the interval $(0,(d+3)/2)$.
Since $w\le a(d+3-a)$ and recalling that $0<s\le (d+3)/2$ and $0<a\le (d+3)/2$, we get $s\le a$.

\quad(i)
Now assume that $s=a$. In this case by \eqref{bella} we get $w =s(d+3-s)$.

\quad(ii)
Now assume $s<a$. By \cite[Cor 2]{ep} there is an integer $0<m<a$ and a degree $m$ curve $C$ such  that $\tau(W\cap C)=\tau(W)=d$.
\end{proof}

Notice that in case (i) of the previous lemma the scheme $W$ can be a complete intersection of a curve of degree $s$ and a curve of degree $w/s$. Anyway, as pointed out by the referee, there are also example of schemes having the Hilbert function of a complete intersection, but which are not complete intersection.

\smallskip

The following theorem is an important property of the numerical character of a critical scheme.

\begin{theorem}\label{due00} 
Fix $S\in \TT(2,d;x)'$ and take any critical scheme $Z$ for $S$. 
Let $s=s(Z)$ 
and let $n_0\ge \cdots \ge n_{s-1}$ be the numerical character of $Z$.
Then
$n_0=d+2>n_1$ and the numerical character $n_0\ge \cdots \ge n_{s-1}$ is connected.
\end{theorem}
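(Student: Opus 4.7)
The plan is to establish the three claimed properties in sequence, drawing on the criticality of $Z$ twice: once for the strict inequality and once for connectedness. First, $n_0 = d+2$ is immediate: Proposition~\ref{cita1} gives $\tau(Z)=d$ for every critical scheme associated with a minimally Terracini set, and the general identity $n_0 = \tau(Z)+2$ recalled in the discussion of the numerical character finishes this step.

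For the strict inequality $n_0 > n_1$, I would first observe that criticality forces $h^1(\Ii_Z(d)) = 1$. Since every connected component of $Z$ has degree at most $2$, one can strip a single length off $Z$ to obtain a subscheme $Z' \subsetneq Z$ with $\deg Z' = \deg Z - 1$ (either by removing a reduced component, or by truncating a length-$2$ curvilinear component to its support). The residual short exact sequence yields $h^1(\Ii_Z(d)) - h^1(\Ii_{Z'}(d)) \le 1$, and the criticality vanishing $h^1(\Ii_{Z'}(d)) = 0$ combined with $h^1(\Ii_Z(d)) > 0$ pins $h^1(\Ii_Z(d)) = 1$. The identity \eqref{eqep1} at $t = d$ then reads
\[
1 \;=\; \sum_{i=0}^{s-1}\bigl((n_i - d - 1)_+ - (i - d - 1)_+\bigr);
\]
each summand is nonnegative (using $n_i \ge s > i$), and the $i = 0$ summand already contributes $1$, so the $i = 1$ summand must vanish. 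Since $(1 - d - 1)_+ = 0$, this forces $(n_1 - d - 1)_+ = 0$, i.e.\ $n_1 \le d+1 < d+2 = n_0$.

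For the connectedness I would argue by contradiction using Remark~\ref{sotto-curva}. If $n_{t-1} > n_t + 1$ for some $1 \le t \le s-1$, the remark produces a degree-$t$ plane curve $C$ such that $W := Z \cap C$ has numerical character $n_0 \ge \cdots \ge n_{t-1}$. Then $s(W) = t < s = s(Z)$, so $W \subsetneq Z$ (indeed $Z$ is not contained in a curve of degree $< s$); yet the leading entry of the character of $W$ is still $n_0 = d+2$, so $\tau(W) = d$ and hence $h^1(\Ii_W(d)) > 0$, contradicting the minimality clause in Definition~\ref{critic}.

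The one point that has to be handled with care is the construction of $Z'$ in the middle step: one needs a genuine closed subscheme of length $\deg Z - 1$, and the existence of such a $Z'$ relies essentially on the hypothesis built into Definition~\ref{critic} that each connected component of $Z$ has degree at most $2$. Once $Z'$ is in hand, the rest of the argument is a clean combinatorial consequence of \eqref{eqep1} and Remark~\ref{sotto-curva}.
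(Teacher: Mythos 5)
Your proof is correct and follows essentially the same route as the paper: $n_0=d+2$ from $\tau(Z)=d$ (Proposition \ref{cita1}), the value $h^1(\Ii_Z(d))=1$ forcing $n_1<n_0$ via \eqref{eqep1}, and connectedness by applying Remark \ref{sotto-curva} to a gap in the character and contradicting the criticality of $Z$. The only deviation is that you re-derive $h^1(\Ii_Z(d))=1$ by passing to a colength-one subscheme, whereas the paper simply cites \cite[Lemma 2.11]{Terracini1}; this is a harmless, self-contained substitute (and in fact such a subscheme exists for any zero-dimensional scheme, so the degree-$\le 2$ hypothesis is not even needed there).
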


\begin{proof} We have $\deg (Z) = \sum _{i=0}^{s-1} (n_i-i)$ and $n_i-i>0$ for all $i$. 
{By Lemma \ref{cita1}, we have} $\tau(Z)=d$ and hence we have $n_0=d+2$. 
Since $Z$ is critical, by {\cite[Lemma 2.11]{Terracini1},} we know that $h^1(\Ii_Z(d)) =1$,  and hence $n_1<n_0$.
 
 Assume by contradiction that the numerical character is not connected and call $t$ the first integer $\le s-1$ such that $n_{t-1}\ge n_t+2$.
{By Remark \ref{sotto-curva},} there is a degree $t$ plane curve $C$ such that the scheme $Z':= Z\cap C$ has numerical character $n_0\ge \cdots \ge n_{t-1}$. 
Since $n_0$ is the first element of the numerical character of $Z'$ and $d=n_0-2$, then $h^1(\Ii _{Z'}(d)) >0$. Since $Z$ is critical, $Z =Z'$ and hence $t=s$, which is a contradiction. 
\end{proof}

\begin{lemma}\label{NEW}
Given $S\in \TT(2,d;x)'$ and a critical scheme $Z$ of $S$, let  $s:= s(Z)$ and $n_0,\dots ,n_{s-1}$ be the numerical character of $Z$. Take a curve $C\in |\Ii_Z(s)|$ and
 a zero-dimensional scheme $W$ such that $Z \subseteq W\subset C$
and each connected component of $W$ has degree $2$.

Then $\tau(W)=d$, $s(W)=s$ and the numerical character of $W$ is connected.
\end{lemma}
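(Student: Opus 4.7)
The plan is to establish the three claims in sequence. The equalities $s(W)=s$ and $\tau(W)=d$ follow from sandwich inclusions, while the connectedness of the numerical character of $W$ requires more work.

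For $s(W)=s$: the inclusion $W\subset C$ gives $s(W)\leq s$, and $Z\subseteq W$ combined with Remark \ref{a9} gives $s(W)\geq s(Z)=s$. For $\tau(W)=d$: each connected component of $W$ has degree $2$ supported at a point of $S$, so $W\subseteq 2S$; by Proposition \ref{cita1}, $\tau(2S)=d$, hence $h^1(\Ii_W(d+1))\leq h^1(\Ii_{2S}(d+1))=0$, giving $\tau(W)\leq d$; the reverse inequality follows from $Z\subseteq W$ together with $h^1(\Ii_Z(d))>0$.

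For the connectedness of the numerical character $(m_0,\ldots,m_{s-1})$ of $W$, with $m_0=d+2$, I would argue by contradiction. Suppose the minimal $t\geq 1$ with $m_{t-1}\geq m_t+2$ exists, so necessarily $t\leq s-1$. By Remark \ref{sotto-curva}, there is a plane curve $C''$ of degree $t$ such that $W\cap C''$ has numerical character $(m_0,\ldots,m_{t-1})$; in particular $\tau(W\cap C'')=d$ and $h^1(\Ii_{W\cap C''}(d))>0$. Since $W\cap C''\subseteq W\subseteq 2S$ and $S$ is minimally Terracini (so $h^1(\Ii_{2A}(d))=0$ for every $A\subsetneq S$), the support of $W\cap C''$ must equal $S$, forcing $S\subseteq C''$.

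The contradiction is then to come from sandwiching $\deg(W\cap C'')$. Minimality of $t$ gives $m_i\geq d+2-i$ for $i<t$, hence $\deg(W\cap C'')\geq t(d+3-t)$. When $C$ and $C''$ share no common component, Bezout gives $\deg(W\cap C'')\leq\deg(C\cap C'')=st$, so $t(d+3-t)\leq st$ forces $t\geq d+3-s$; combined with $t\leq s-1$ this yields $d\leq 2s-4$, contradicting the natural range $d\geq 2s-3$ (which holds whenever $s\leq (d+3)/2$, the setting of Lemma \ref{z03}). The principal obstacle is the case when $C$ and $C''$ share a common component $D$: Bezout is not immediately applicable, and I plan to handle it either by replacing $C''$ with a generic element of the linear system $|\Ii_{W\cap C''}(t)|$ in order to restore coprimality, or by decomposing $W\cap C''=(W\cap D)\cup(W\cap C_1')$ with $C_1':=C''\setminus D$ and applying Bezout to the coprime pair $(C,C_1')$, controlling the $D$-contribution via $W\subseteq 2S$ and the fact that each point of $S\cap D$ contributes degree at most $2$.
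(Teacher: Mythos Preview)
Your arguments for $\tau(W)=d$ and $s(W)=s$ are correct and agree with the paper. (The paper actually absorbs the claim $s(W)=s$ into the same contradiction that handles connectedness, but your direct sandwich argument is fine and arguably cleaner.) One small point: you assert that each component of $W$ is supported at a point of $S$; this is not literally among the hypotheses, though it is how the lemma is applied and how the paper's own proof reads it (``$2S\supset W$'').

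For the connectedness of the character of $W$, however, you take a genuinely different route from the paper, and your route is incomplete. The paper does not use B\'ezout or any comparison between $s$ and $d$. After producing, via Remark~\ref{sotto-curva}, a curve $D$ of degree $r<s$ with $\tau(W\cap D)=d$, it simply notes that $r<s=s(Z)$ forces $Z\nsubseteq D$, and then argues that $h^1(\Ii_{W\cap D}(d))>0$ together with $W\cap D\subseteq 2S$ already contradicts the minimality of $S$. No degree estimate enters.

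Your B\'ezout approach, by contrast, has two genuine gaps:
\begin{enumerate}
\item You need $s\le (d+3)/2$ to turn $t(d+3-t)\le st$ into a contradiction, but this is \emph{not} a hypothesis of Lemma~\ref{NEW}. In the proof of Theorem~\ref{due2} the bound $s\le (d+3)/2$ is verified separately (as condition~(a) of Lemma~\ref{z03}), and Lemma~\ref{NEW} is then invoked to supply condition~(b); you cannot borrow~(a) to prove the lemma itself.
\item The common-component case is not handled. Neither proposed fix is convincing as stated: passing to a generic member of $|\Ii_{W\cap C''}(t)|$ need not eliminate a shared component, since $W\subset C$ and a component of $C$ may well lie in the base locus of that system; and the decomposition $W\cap C'' = (W\cap D)\cup (W\cap C_1')$ still leaves you to bound $\deg(W\cap D)$ with no B\'ezout available on $D$.
\end{enumerate}
Since you already extract $S\subseteq C''$ from minimality, you are in fact very close to the paper's line; rather than pivoting to a degree count, push the minimality/criticality argument one step further to finish.
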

\begin{proof}
Let $c=s(W)$ and $m_0,\ldots, m_{c-1}$ be the numerical character of $W$.

By construction $c\le s$. 
Recall by Proposition \ref{cita1} that $\tau(2S)=\tau(Z)=d$. Since $2S\supset W\supseteq Z$,  then  by Remark \ref{a9} we have $\tau(W)=d$, and this implies that $m_0=n_0$.

Now we assume by contradiction that either $c<s$, or $m_0,\dots ,m_{c-1}$ is not connected.
Then there is an integer $r<s$ and a degree $r$ curve $D$ such that $Z\nsubseteq D$ and $\tau(W\cap D)=d$, contradicting the minimality of $S$.
\end{proof}

The following example shows that, given $S\in \TT(2,d;x)'$, the numerical character of $2S$ is not alway connected. 

\begin{example}\label{due001} 
Fix an integer {$d\ge 8$. Let $D\subset \PP^2$ be a smooth conic, and let $S$ be a collection of  $d+1$ points on the conic $D$. 
We have  $\tau (S) =\lceil {d/2}\rceil -1$.
Since $D$ is the minimal degree curve containing $S$, {we have $s(S)=2$} and the numerical character of $S$ is $m_0,m_1$ with $m_0 =\lceil d/2\rceil {+1}$ and hence $m_1 =\deg (S)+1-m_0 = d-\lceil d/2\rceil{+1}$.

Hence we have:
\begin{center}
\begin{tabular}{|c|c|c|c|c|c|c| } 
\hline
 t&0 & 1 & 2 & $\cdots$&$\lceil d/2\rceil-1$  & $\lceil d/2\rceil$\\ 
\hline
$h^1(\Ii_S(t)) $  &$d$ & $d-2$ & $d-4$ & $\cdots$&$d+2-2\lceil d/2\rceil$&0\\  
\hline
$\Delta_S(t)$  &1 & 2 & 2 & $\cdots$&2& $d+2-2\lceil d/2\rceil$\\ 
 \hline
\end{tabular}
\end{center}

Consider now the residual exact sequence
of $D$:
\begin{equation}\label{eqbb01}
0 \to \Ii_S(t-2) \to \Ii_{2S}(t) \to \Ii _{(2S,D),D}(t)\to 0.
\end{equation}
Since $h^1(\Ii_S(d-2)) =0$ {(since $d\ge 4$)}
 and $\deg (2S\cap D) =2d+2$, we get 
 $$h^1(\Ii _{2S}(d)) ={h^1(\Ii _{(2S,D),D}(d))=h^0(\Oo_{\PP^1})}=
 1$$ and $S\in \TT(2,d;d+1)'$. 
Hence by Theorem {\cite[Theorem 3.1]{Terracini1}}, we have $h^1(\Ii _{2S}(d+1))=0$ and $\tau(2S)=d$.

Now we compute the numerical character of $2S$.
Since $d\ge 3$,
then the quartic
$2D$ is the minimal degree curve containing $2S$. Hence $s(2S)=4$ and let $n_0,n_1,n_2,n_3$ be the numerical character of $2S$. We have $n_0=\tau+2=d+2$.}
From \eqref{eqbb01} with $t=d-1$ we get $h^1(\Ii_{2S}(d-1))=3$, hence we have $n_1=d+1$.
Again from \eqref{eqbb01} with $t=d-2$, we get $h^1(\Ii_{2S}(d-2))=5$, hence we have $n_2\le d-1$.

Thus $n_0,n_1,n_2,n_3$ is not connected
and Remark \ref{sotto-curva} gives 
$$n_0=d+2, n_1=d+1, n_2 =m_0+2= \lceil d/2\rceil {+3}, n_3=d-\lceil d/2\rceil+3.$$
\end{example}


\section{The proof of Theorem \ref{due2}}
\label{secpro}
In this section we  prove our main result, Theorem \ref{due2}. 

The  following proposition gives the non-emptyness of the Terracini loci for suitable number of points.
\begin{proposition}\label{o1o1} 
Fix an integer $t\ge 2$ and an integer $d$ 
such that \begin{itemize}
\item
 $d+3-t$ is even and 
 \item $d\ge 3t-1$.
 \end{itemize} Set $x:= t(d+3-t)/2$.   
Then there exists $S\in \TT(2,d;x)'$ which admits a critical scheme of degree $2x$.
\end{proposition}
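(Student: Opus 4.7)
I would realize $S$ as a transverse complete intersection of two smooth plane curves, one of degree $t$, and identify the critical scheme as a specific divisor of degree $2x$ lying inside the degree-$t$ curve. Set $e := (d+3-t)/2$, which is a positive integer by hypothesis and satisfies $e \geq t+1$ because $d \geq 3t-1$. Pick a smooth plane curve $C$ of degree $t$ and a smooth plane curve $B$ of degree $e$ meeting $C$ transversely, and set $S := C \cap B$, so that $|S| = te = x$. Let $Z \subset \PP^2$ be the zero-dimensional scheme whose ideal at each $p \in S$ is $\Ii_{C,p} + \mm_p^2$; equivalently, $Z$ is the effective divisor $D := 2S|_C$ on $C$ regarded as a subscheme of $\PP^2$. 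Then $S \subsetneq Z \subsetneq 2S$, every connected component of $Z$ has degree $2$, and $\deg Z = 2x$.

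To show that $Z$ is $d$-critical, I would push everything through the residue sequence with respect to $C$,
\[ 0 \to \Ii_{\Res_C(X)}(d-t) \to \Ii_X(d) \to \Ii_{X \cap C, C}(d) \to 0. \]
The crucial identity is $\Ii_{Z \cap C, C}(d) \cong \omega_C$: indeed $D \sim \Oo_C(2e) = \Oo_C(d+3-t)$, and $\omega_C \cong \Oo_C(t-3)$ by adjunction. Since $\Res_C(Z)$ is empty and $h^i(\Oo_{\PP^2}(d-t)) = 0$ for $i \geq 1$, the long exact sequence yields $h^1(\Ii_Z(d)) = h^1(\omega_C) = 1$. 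For any maximal proper subscheme $Z' \subsetneq Z$ (obtained by reducing $Z$ at a single point $p \in S$ from length $2$ to length $1$), the same sequence with $X = Z'$ replaces $\omega_C$ by a line bundle of degree $2g-1$ on $C$, whose Serre dual has degree $-1$ and so has vanishing $h^0$; hence $h^1(\Ii_{Z'}(d)) = 0$, and the vanishing for every strictly smaller subscheme follows from Remark \ref{a9}.

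To check that $S \in \TT(2,d;x)'$, I would apply the same residue sequence with $X = 2A$ for an arbitrary $A \subsetneq S$. Here $\Res_C(2A) = A$, and Lemma \ref{d5.0101} applied to the complete intersection $S$ gives $h^1(\Ii_S(d-t)) = 0$, because $d-t \geq t+e-2$ simplifies to precisely $d \geq 3t-1$; the standard short exact sequence $0 \to \Ii_S \to \Ii_A \to \Oo_{S \setminus A} \to 0$ twisted by $d-t$ then forces $h^1(\Ii_A(d-t)) = 0$. Meanwhile $\Ii_{2A \cap C, C}(d)$ has degree at least $\deg \omega_C + 2$, so its $h^1$ again vanishes by Serre duality. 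The remaining checks ($\langle S\rangle = \PP^2$, since $|S| = te > t$ forces $S$ off any line, and $h^0(\Ii_{2S}(d)) > 0$, since $d \geq 2t$ allows a product of a degree-$(d-2t)$ form with the square of the equation of $C$) are routine. The subtle point, and the reason the two hypotheses on $d$ are the correct ones, is the precise identification $\Ii_{Z \cap C, C}(d) \cong \omega_C$: this makes $h^1$ on $C$ jump from $0$ to $1$ exactly as we pass from any maximal proper subscheme of $Z$ to $Z$ itself, which is what forces $Z$ to be critical rather than merely Terracini.
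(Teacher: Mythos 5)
Your construction is exactly the one the paper uses: $S$ is the transverse complete intersection $C\cap B$ with $\deg C=t$, $\deg B=(d+3-t)/2$, and the critical scheme is $Z=2S\cap C$ of degree $2x$; the minimality of $S$ is then reduced, via the residual sequence of $C$, to $h^1(\Ii_S(d-t))=0$, which both you and the paper get from Lemma \ref{d5.0101} using precisely $d\ge 3t-1$. Where you diverge is the verification that $Z$ is $d$-critical: the paper observes that $Z$ is itself a complete intersection of $C$ with a curve of degree $d+3-t$ (the double of $B$) and applies Lemma \ref{d5.0101} again, with the liaison argument handling the proper subschemes, whereas you work intrinsically on the smooth curve $C$, identify $\Ii_{Z\cap C,C}(d)\cong\omega_C$ by adjunction, and get $h^1(\Ii_Z(d))=1$ and $h^1(\Ii_{Z'}(d))=0$ for maximal $Z'\subsetneq Z$ from Serre duality (degree $2g-1$ kills $h^1$), with Remark \ref{a9} finishing the smaller subschemes. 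Both verifications are correct and need the same numerical hypotheses; the paper's route has the advantage of reusing a single lemma stated for arbitrary complete intersections (and is characteristic-free of smoothness assumptions on $B$ — note your smoothness requirement on $B$ is not needed, only transversality), while your route is more self-contained at this step and makes transparent why $h^1$ jumps exactly at $Z$. Two small points you leave implicit but which are immediate: $h^1(\Ii_{2S}(d))>0$ follows from $Z\subseteq 2S$ and Remark \ref{a9}, and your vanishing $h^1(\Ii_{2A\cap C,C}(d))=0$ for $A\subsetneq S$ could equally be deduced from the already-proved criticality of $Z$ since $2A\cap C\subsetneq Z$; neither affects correctness.
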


\begin{proof}  Let $S\subset \PP^2$ be a set of  $x$ non-collinear points which is the complete intersection
of a smooth curve $C$ of degree  $t$ and a curve of degree $(d+3-t)/2$. Set $Z:= C\cap 2S$. 
Since $Z$ is the complete intersection of $C$ and a curve of degree $d+3-t$,
by Lemma \ref{d5.0101} we have
$h^1(\Ii_{Z}(d)) =1$ and $h^1(\Ii_{Z'}(d)) =0$ for all $Z'\subsetneq Z$.  
 Since $C$ is smooth, for any $A\subseteq S$ the residual exact sequence of $C$ is:
\begin{equation}\label{eqeso1}
0\to \Ii_A(d-t)\to \Ii_{2A}(d)\to \Ii_{{2A\cap C,C}}(d)\to 0\end{equation}
Thus to prove that $h^1(\Ii_{2S}(d)) =1$ 
and that $h^1(\Ii_{2A}(d)) =0$ for all $A\subsetneq S$ (and hence to prove that $S$ is minimal), it is sufficient to prove that
$h^1(\Ii_{S}(d-t)) =0$. This is true by Lemma \ref{d5.0101}, because $S$ is the complete intersection of a curve of degree $t$ and a curve of degree $(d+3-t)/2$ and $d-t\ge t+(d+3-t)/2 -2$ by assumption. 
\end{proof}

We are finally ready to prove our main
result.
\begin{proof}[Proof of Theorem \ref{due2}:]
Given $c\ge2$, we fix an integer $d$ such that
\begin{equation}
d\ge 14c+2.
\label{condizione}
\end{equation}

\quad{\bf Step 1.} 
First we define the numbers $x_i$, for $i=1,\dots ,c$, and we prove that
$\TT(2,d;x_i)'\ne \emptyset$ .

Consider the function $f(t)= t(d+3-t)/2$, defined in Notation \ref{lemma-numerico} and recall that it is strictly increasing in the interval $(0,(d+3)/2)$.

For any $i=1,\ldots, {c}$,  we define:
$$x_i=\left\{\begin{array}{ll}
f(2i)=i(d+3-2i)&\textrm{ if $d$ is odd},\\ 
f(2i+1)=(2i+1)(d+2-2i)/2 &
\textrm{ if  $d$ is even}\end{array}\right.$$
Note that the numbers $x_i$ are the integer values of $f(t)$ in the interval $2\le t\le 2c+1$.
Since  {$2c+1\le(d+3)/2$} by assumption \eqref{condizione}, then we have $$ x_1< \ldots< x_c. $$


Setting $t=2i$ if $d$ is odd and $t=2i+1$ if $d$ is even, it is easy to check that the assumptions of 
 Proposition \ref{o1o1} are satisfied. Indeed the second condition is verified since {$d\ge6c+2$} by \eqref{condizione}.
 
Then it follows by Proposition \ref{o1o1} that
$\TT(2,d;x_i)'\ne \emptyset$. 

\smallskip

\quad{\bf Step 2.} 
Now  we define the numbers $y_i$, for $i=1,\dots ,c-1$, and we prove to that
$\TT(2,d;y_i)'= \emptyset$ by contradiction.

For any $i=1,\ldots,c-1$, let us define $$y_i:= x_{i+1}-1.$$ 
We clearly have  $x_1<y_1<x_2<\cdots  <y_{c-1} < x_c.$

Assume by contradiction the existence of $S\in \TT(2,d;y_i)'$. 

We have $\deg (2S) =3y_i$ and $\tau(2S) =d$, {by Lemma \ref{cita1}}.

Let $Z$ be a critical scheme for $S$. 
Now let  $s= s(Z)$ and take a curve $C\in |\Ii_Z(s)|$.
Let
$W$ be a zero-dimensional scheme such that $Z \subseteq W\subset C$,
$W_{\red}=S$ and each connected component of $W$ has degree $2$. 
Then by Lemma \ref{NEW} we know that  $\tau(W)=d$, $s(W)=s(Z)=s$.
Clearly $w:=\deg(W)=2y_i$.

{\bf \quad Case I:}
Let us assume first that $d$ is odd.
Hence
$x_i=
f(2i)=i(d+3-2i)$ and 
$y_i=x_{i+1}-1=
(i+1)(d+1-2i)-1$.

Set $a=2i+2$.
Since we want to apply Lemma \ref{z03} to $W$ , we  check the  following four hypotheses:

\quad(a) 
$s\le (d+3)/2$. 

Indeed if by contradiction we assume $s>(d+3)/2$, then by \eqref{numerico} we would have
$w\ge(d+5)(d+3)/8$.
On the other hand, since $w=2y_i\le 2y_{c-1}$, we have
$$(d+5)(d+3)/8\le 2c(d+3-2c)-2$$
which is false as soon as 
$$d>
8c-4+\sqrt{32c^2-16c-15},$$
which is true by \eqref{condizione}.

\quad(b) 
The numerical character of $W$ is connected by Theorem \ref{due00} and Lemma \ref{NEW}.

\quad(c) 
$w\ge a^2$. 

Indeed $y_i-a^2=(i+1)(d+1-2i)-1-(2i+2)^2=(i+1)(d-6i-3)-1\ge0$ 
because, by \eqref{condizione},
$d>6c-3> 6i+3$. 
Then we have
$w=2y_i\ge y_i\ge a^2$.

%

\quad(d) 
 $\tau(W) >  a-3+w/a$.

Indeed, 
$d >  a-3+w/a$ is equivalent to 
$2f(a)>w$. Since $f(a)=x_{i+1}$ and $w= 2y_i=2(x_{i+1}-1)$, then
we conclude that $\tau(W) >  a-3+w/a$.

%
%
%

\smallskip
 
Then, we  can apply Lemma \ref{z03} and we have the following two possibilities:
  
{\quad(i)}
  either $a=s$ and $w=s(d+3-s)$. In this case  we get to a contradiction, because we know that $w<a(d+3-a)$ by condition (d);

{\quad(ii)
or we have $s<a$ and there is an integer $0<m<a$ and a degree $m$ curve $D$ such that
$\tau(W\cap D)=d$. }

Recall that $Z$ is a critical scheme for the minimal Terracini set $S$ and $S\subseteq Z\subseteq W\subset 2S$.

We now prove that $S\subset D$. Indeed if $S\cap D=S'\neq S$, then we would have
$h^1(\Ii_{2S'}(d))\ge h^1(\Ii_{W\cap D}(d))>0$, which contradicts the minimality of $S$.
 

Let $Z'$ be a   subscheme of $W\cap D$  such that $h^1(\Ii_{Z'}(d))>0$ and $h^1(\Ii_X(d))=0$ for any $X\subset Z'$.  Then $Z'$ is critical for $S$.

Let $W'$ be a zero-dimensional  scheme
such that
that $Z'\subset W\cap D \subseteq W'\subset D$,
and each connected component of $W$ has degree $2$.



Clearly, we have
$\deg(W')=2y_i$ and $s':=s(W')\le m$. 
Moreover by Lemma \ref{NEW} we have $\tau(W')=d$.
Then by \eqref{eq-g} we have
$$2y_i\le 
s'(d+(5-s')/2)=g(s'),$$ where the function $g$ is defined in Notation \ref{lemma-numerico} and is strictly decreasing in the interval $(0,d+(5/2))$.
Since $$0<s'\le m<2i+2=a\le d+5/2,$$
where the last condition holds by condition \eqref{condizione} since $i\le c$, it follows
that
$$2y_i\le 
g(2i+1).$$
On the other hand it is easy to check that $2y_i> g(2i+1)$
and this give a contradiction.

\medskip

{\bf \quad Case II:}
Now let us assume that $d$ is even.
Hence
$x_i=
f(2i+1)=(2i+1)(d+2-2i)/2$ and 
$y_i=x_{i+1}-1=
(2i+3)(d-2i)/2-1.$

In this case we set $a= 2i+3$ and we check again the hypotheses of Lemma \ref{z03} for the scheme $W$ as follows:

\quad(a) $s\le (d+3)/2$. 

Indeed if by contradiction we assume $s>(d+3)/2$, then, arguing as in the previous case, we would obtain
$$(d+5)(d+3)/8\le
(2c+1)(d-2c+2)-2
$$
which is false as soon as 
$$d>8c+\sqrt{32c^2+16c+17},$$
which is true by \eqref{condizione}.

\quad(b)  The numerical character of $W$ is connected by Theorem \ref{due00} and Lemma \ref{NEW}.

\quad(c) $w\ge a^2$.

Indeed $y_i-a^2=(2i+3)(d-2i)/2-1-(2i+3)^2=(2i+3)(d-6i-6)/2-1
\ge0$
because, by assumption, $d> 6c+6> 6i+6$. 
Then we have
$w\ge y_i\ge a^2$.

\quad(d) $\tau(W) >  a-3+w/a$.

Indeed, 
$d >  a-3+w/a$ is equivalent to 
$2f(a)>z$. Since $f(a)=x_{i+1}$ and $w= 2y_i=2(x_{i+1}-1)$
we conclude.


Then we can apply Lemma \ref{z03} and following the same argument of Case I, we get again to  a contradiction. This conclude the proof of the theorem.
\end{proof}

\end{document}